\documentclass[12pt]{article}

\usepackage{amsmath,amsthm,xcolor}
\usepackage[T1]{fontenc}
\usepackage{lmodern}
\usepackage{enumitem}
\usepackage[letterpaper,left=2.1cm,right=2.1cm,top=2.5cm,bottom=2.5cm,footskip=1.2cm]{geometry}
\usepackage{hyperref}
\usepackage[bf,small]{titlesec}
\numberwithin{equation}{section}
\theoremstyle{plain}
\newtheorem{theorem}{Theorem}[section]
\newtheorem{lemma}[theorem]{Lemma}
\newtheorem{corollary}[theorem]{Corollary}

\newtheorem{claim}[theorem]{Claim}

\newtheorem{conjecture}[theorem]{Conjecture}

\newtheorem{observation}[theorem]{Observation}

\newtheorem{proposition}[theorem]{Proposition}
\newtheorem{case}{Case}
\newtheorem{subcase}{Subcase}[case]

\theoremstyle{definition}
\newtheorem{definition}[theorem]{Definition}

\title{Connectivity keeping paths in digraphs}

\author{Hojin Chu\thanks{Korea Institute for Advanced Study, Seoul 02455, Republic of Korea. Email: {\tt hojinchu@kias.re.kr}},
\and Boram Park\thanks{Department of Mathematics Education, Seoul National University, Seoul 08826, Republic of Korea. Email: {\tt borampark@snu.ac.kr} },
\and Homoon Ryu\thanks{Department of Mathematics Education, Seoul National University, Seoul 08826, Republic of Korea. Email: {\tt ryuhomoon@ajou.ac.kr}}
}

\date{}

\begin{document}

\maketitle

\begin{abstract}
Mader conjectured that every $k$-strong digraph $D$ with minimum semidegree $\delta^0(D)\ge 2k+m-1$ contains a dipath $P$ of order $m$ such that $D-V(P)$ remains $k$-strong.
For $k=1$, he obtained the weaker bound $\delta^0(D)\ge 2m$.
We confirm the conjecture for $k=1$ by showing that the sharp bound $\delta^0(D)\ge m+1$ suffices.
As a consequence, we show that for every integer $m\ge2$, every strongly connected digraph $D$ with $\delta^0(D)\ge\max\{2,m-1\}$ contains a dipath $P$ of order $m$ such that $D-A(P)$ is strongly connected.
\end{abstract}

\section{Introduction}\label{sec:intro}

Chartrand, Kaugars, and Lick~\cite{CKL72} proved that every $k$-connected graph $G$ with $\delta(G)\ge\lfloor 3k/2\rfloor$ has a vertex $v$ such that $G-v$ remains $k$-connected.
Mader~\cite{Mader10} extended this result by proving that every $k$-connected graph $G$ with $\delta(G)\ge\lfloor 3k/2\rfloor+m-1$ contains a path $P$ of order $m$ such that $G-V(P)$ remains $k$-connected.
Problems of finding a prescribed subgraph whose deletion preserves connectivity are commonly referred to as connectivity keeping subgraph problems; see~\cite{Mader05, TM26} for surveys and further results.

The directed analogue of the result of Chartrand, Kaugars, and Lick is due to Mader~\cite{Mader91}.
He proved that every $k$-strong digraph $D$ with minimum semidegree $\delta^0(D)\ge 2k$ has a vertex $x$ such that $D-x$ remains $k$-strong, and that the bound $2k$ is best possible.
Later, Mader~\cite{Mader12} proposed the following conjecture for connectivity keeping paths in digraphs.

\begin{conjecture}[Mader~{\cite[Conjecture~2]{Mader12}}]\label{conj:mader-digraph}
Let $k$ and $m$ be positive integers.
Every $k$-strong digraph $D$ with $\delta^0(D)\ge 2k+m-1$ contains a dipath $P$ of order $m$ such that $D-V(P)$ remains $k$-strong.
\end{conjecture}

Mader~\cite{Mader12} observed that the bound $2k+m-1$ in Conjecture~\ref{conj:mader-digraph} is best possible for all $k$ and $m$.
To the best of our knowledge, the conjecture has only been verified for $m=1$ with arbitrary $k$ and for $(k,m)=(1,2)$ in \cite{Mader91, Mader12}.
In a related direction, Tian, Lai, Xu, and Meng~\cite{TLXM19} showed that, when $k=1$, the same semidegree bound guarantees connectivity keeping oriented stars and double-stars of order $m$.

For $k=1$ and arbitrary $m$, Mader proved the following weaker result for dipaths.

\begin{theorem}[Mader~{\cite[Proposition~1]{Mader12}}]\label{thm:mader}
Let $m$ be a positive integer.
Every strongly connected digraph $D$ with $\delta^0(D)\ge 2m$ contains a dipath $P$ of order $m$ such that $D-V(P)$ is strongly connected.
\end{theorem}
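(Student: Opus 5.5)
The plan is an extremal argument. First note that $D$ contains dipaths of order $m$: a longest dipath has at least $\delta^0(D)+1\ge 2m+1$ vertices, because the out-neighbours of its last vertex all lie on it. Among all dipaths $P$ of order $m$, choose one for which the largest strong component $H$ of $D'=D-V(P)$ is as large as possible. Subject to that, choose $P$ so that $D'$ has as few strong components as possible. Suppose, for a contradiction, that $D'$ is not strong. Then the condensation of $D'$ has an initial component $I$ and a terminal component $T$ with $I\ne T$. At least one of them differs from $H$; by reversing all arcs, which preserves the hypothesis and the conclusion, we may assume $T\ne H$.

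Next I record the degree properties of $T$. Every out-neighbour of a vertex of $T$ lies in $T\cup V(P)$. Hence each vertex of $T$ has at least $2m-m=m$ out-neighbours inside $T$, so $D[T]$ has minimum out-degree at least $m$. In particular $|T|\ge m+1$ and $D[T]$ contains a dipath $Q$ of order $m$. I would choose $Q$ as the last $m$ vertices of a longest dipath of $D[T]$, or more carefully via an ear-type construction inside the strong digraph $D[T]$, so that $D[T]-V(Q)$ has controlled structure.

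The key step is to show that in $D^*=D-V(Q)$ the set $V(H)\cup V(P)$ lies in a single strong component. That component is strictly larger than $H$, which contradicts the choice of $P$. Here the bound $2m$ enters through a counting argument. Every vertex $v$ of $P$ has at least $2m$ out-neighbours and $2m$ in-neighbours. At most $m-1$ of each lie in $V(P)\setminus\{v\}$ and at most $m$ lie in $V(Q)$, so $v$ has an out-neighbour and an in-neighbour in $D'-V(Q)$. One then follows paths in the condensation of $D'$. From any vertex of $D'-V(Q)$ one can walk forward, and since $D$ is strong and $H\cup P$ is now accessible, one wants to reach $H$. Conversely, $H$ must reach every vertex of $P$. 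I would use that $D$ is strong: there are paths from $H$ to $P$ and from $P$ to $H$ in $D$. The only danger is that these paths pass through $Q\subseteq T$. Since $T$ is terminal in $D'$, a path leaving $T$ must go through $P$. So any path from $H$ to $P$ that enters $T$ can be shortcut, using the remaining vertices of $P$ and their surplus neighbours.

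I expect the main obstacle to be exactly this reachability. Specifically, I must show that deleting $Q$ does not cut $H$ off from $P$ or cut $P$ off from $H$, because routes might be forced through $T$. If the first choice of $Q$ fails, I would use the second minimality condition (fewest components) together with a careful choice of $Q$. For instance, take $Q$ so that $T-V(Q)$ still has a vertex with an arc into $P$, using the surplus $m$ out-neighbours in $D[T]$ to keep $T-V(Q)$ joined to $P$. If needed, I would also show that every component of $D'-V(Q)$ other than pieces of $T$ still reaches, and is reached from, $P\cup H$ exactly as it did in $D'$.
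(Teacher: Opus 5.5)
\textbf{There is a genuine gap: the key step is asserted but never proved.} Your preliminary steps are correct: the extremal choice of $P$, the reduction by reversal to a sink component $T\ne H$ of $D'=D-V(P)$, the bound $\delta^+(D[T])\ge m$, and the count giving each $v\in V(P)$ an out- and an in-neighbour in $D'-V(Q)$. The missing step is that some dipath $Q$ of order $m$ in $D[T]$ leaves $V(H)\cup V(P)$ inside one strong component of $D-V(Q)$.

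With $P=v_1\cdots v_m$, this target is equivalent to two conditions in $D-V(Q)$: $H$ reaches $v_1$, and $v_m$ reaches $H$. Only a weak part of this is automatic. A $(V(P),V(H))$-dipath cannot meet the sink component $T\ne H$. So if $i$ is the largest index such that $v_i$ starts such a dipath, then $v_1,\dots,v_i$ reach $H$ in $D-V(Q)$ for every $Q\subseteq V(T)$. But if $i<m$, every route from $v_m$ to $\{v_1,\dots,v_i\}$, and every route from $H$ to $v_1$, may be forced through $T$, where $Q$ can cut it.

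Your one-neighbour count is the only use of $\delta^0(D)\ge 2m$ here, and it carries no reachability information. The out-neighbour of $v_m$ in $D'-V(Q)$ may lie in $T-V(Q)$, or in a component of $D'$ from which $H$ is unreachable. Likewise, the in-neighbour of $v_1$ may be unreachable from $H$.

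Your ``shortcut'' remark runs the wrong way. Since $T$ is a sink of $D'$, a route from $H$ into $T$ leaves $T$ only by entering $P$. So its segment inside $T$ comes before its first vertex of $P$, and surplus neighbours of $P$-vertices cannot reroute it. Whether such a route survives depends on whether $T-V(Q)$ still contains a path from where it enters $T$ to a suitable vertex of $T$ with an arc into $P$. Neither a terminal segment of a longest dipath of $D[T]$ nor keeping some vertex of $T-V(Q)$ with an arc into $P$ guarantees this. The fewest-components condition you cite as a fallback is never actually used.

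\textbf{How the paper avoids this.} The paper proves the stronger Theorem~\ref{thm:main} ($\delta^0(D)\ge m+1$), which implies this statement. It sidesteps the issue by absorbing only sets whose strong connectivity is witnessed by explicit arcs that the new dipath provably avoids. Your idea appears only where it is safe, in Subcase~2.1 of Lemma~\ref{lem:leaving-H}. There $H$ is both a source and a sink of $D-V(P)$, and the connections $(h^*,v_p)$ and $(v_m,h)$ are single arcs. A dipath of order $m$ chosen in the remaining part $C$ then cannot touch the strong set $V(H)\cup V(P[v_p,v_m])$.

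In general the paper takes an $H$-leaving dipath $q_0q_1\cdots q_{m+1}$ and a return dipath $Z$, chosen longest and then starting as late as possible. It absorbs only a small strong set such as $V(H)\cup V(Z)\cup\{q_0,\dots,q_s\}$. It then builds the competing dipath of order $m$ explicitly, e.g.\ $q_2\cdots q_{m+1}$ or $r_2\cdots r_t q_j q_{j+1}\cdots q_b$, so disjointness from the absorbed set is automatic. The endpoint count of Lemma~\ref{lem:count} and the rotation argument of Claim~\ref{clm:rotation} then give the contradiction.

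To rescue your route, you would need a comparable device that provably keeps $Q$ off the connecting routes. It would also help to aim for a smaller absorbed set, such as $H$ together with a segment of $P$.
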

Mader further observed that the first step of his proof, which constructs an initial dipath of order $m+2$, also works under the weaker assumption $\delta^0(D)\ge m+1$. However, he noted that the next step, finding another dipath of order $m$ avoiding the vertices absorbed in the enlargement, is problematic. He also remarked that finding such a dipath ``seems not so easy'' and pointed out that the complement of the particular enlargement arising in his proof may contain no dipath of order $m$~\cite[p.~329]{Mader12}.

Our main result overcomes this obstacle through a refined extremal argument. We reduce the bound in Theorem~\ref{thm:mader} from $2m$ to $m+1$, thereby confirming Conjecture~\ref{conj:mader-digraph} for $k=1$.

\begin{theorem}\label{thm:main}
Let $m$ be a positive integer.
Every strongly connected digraph $D$ with $\delta^0(D)\ge m+1$ contains a dipath $P$ of order $m$ such that $D-V(P)$ is strongly connected.
\end{theorem}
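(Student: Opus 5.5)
We argue by an extremal choice. Let $D$ be strongly connected with $\delta^0(D)\ge m+1$. Among all dipaths $Q$ in $D$ such that $D-V(Q)$ contains a strong component $C$ with the right structure, we will pick one that is maximal in a suitable sense and then extract from it a subpath of order $m$ whose deletion leaves a strong digraph.

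\textbf{Setup.} Call a pair $(P,H)$ \emph{good} if:
\begin{itemize}[nosep]
\item $P$ is a dipath of order at least $m$;
\item $H=D-V(P)$ is nonempty;
\item the part of $D$ obtained by deleting only the first $m$ vertices of $P$ is strong.
\end{itemize}
A cleaner and more workable variant is Mader's \emph{enlargement} idea. Take a dipath $P=p_1\dots p_r$ and a strong subdigraph $S$ of $D-V(P)$ such that $p_r$ has an out-neighbour in $S$ and $p_1$ has an in-neighbour in $S$. Then $S\cup P$ is strong, and any initial or terminal segment of $P$ can be removed while keeping $S$ together with the rest of $P$ strong.

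\textbf{Existence.} First we show that such configurations exist with $r\ge m$. Since $\delta^0(D)\ge m+1$, take a longest dipath or a longest cycle. A standard argument, in which the out-neighbours of the last vertex of a longest path all lie on it, yields a cycle of length at least $m+2$. This is Mader's first step, which he notes works under $\delta^0\ge m+1$.

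\textbf{Extremal choice.} Choose a strong subdigraph $S\subseteq D$ with $|V(S)|$ maximum subject to $D-V(S)$ containing a dipath of order $m$. If $S=D-V(Q)$ for some such dipath $Q$, we are done. Otherwise consider $R=D-V(S)$. If $R$ contains a dipath of order $m$, say $Q$, with $D-V(Q)$ not strong, then some vertex $x\notin S\cup Q$ lies in a component of $D-V(Q)$ that is not reachable from $S$, or cannot reach $S$. We then use the degree condition on $x$. Since $x$ has at least $m+1$ out-neighbours and $|V(Q)|=m$, $x$ has an out-neighbour outside $Q$. Iterating this along a longest walk away from $Q$ within $R-V(Q)$ produces an ear through $R\setminus Q$ that attaches to $S$ at both ends, or a closed structure in $R-V(Q)$. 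We add this ear or closed structure to $S$, contradicting the maximality of $|V(S)|$, provided we can re-find a dipath of order $m$ in the complement. The key freedom is to choose $Q$ \emph{last}: among dipaths of order $m$ in $R$, take one minimizing the number of vertices of $R$ not strongly attached to $S$ in $D-V(Q)$. In addition, consider the terminal subpaths of a longest dipath in $R$, exploiting the fact that its endpoint has all but at most one out-neighbour in $S\cup V(\text{path})$.

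\textbf{Main obstacle.} The crux is exactly Mader's difficulty: after absorbing vertices into $S$, the remaining part $R$ may contain no dipath of order $m$. To handle it, we will not absorb arbitrary ears. Instead we always absorb along a longest dipath $L=\ell_1\dots\ell_t$ in $R$ with $t\ge m$, and we keep its last $m$ vertices as the candidate $Q$. Every out-neighbour of $\ell_t$ lies in $S\cup V(L)$, and every in-neighbour of $\ell_1$ lies in $S\cup V(L)$. Using $\delta^0\ge m+1$, we then get the following:
\begin{itemize}[nosep]
\item $\ell_t$ sends an arc either to $S$ or to some $\ell_i$ with $i\le t-m$;
\item symmetrically, $\ell_1$ receives an arc from $S$ or from some $\ell_j$ with $j\ge m+1$.
\end{itemize}
The case analysis of these back-arcs should show that either $L-\{\ell_{t-m+1},\dots,\ell_t\}$ together with $S$ is strong, or some shift of the window of $m$ consecutive vertices works, or we can enlarge $S$ while $L$ survives. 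This keeps a dipath of order $m$ available. Making this window-shifting argument airtight is where I expect most of the work.
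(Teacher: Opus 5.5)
Your extremal object is the right one. A maximum strong subdigraph $S$ such that $D-V(S)$ still contains a dipath of order $m$ is exactly the paper's $m$-max pair $(P,H)$. But the proposal stops where the theorem actually begins. The sentence ``the case analysis of these back-arcs should show that \dots'' is the entire difficulty; it is where Mader's own argument only reached $2m$. Nothing you wrote explains why one of your three alternatives must occur.

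Several supporting claims are also false as stated:
\begin{itemize}
\item In the enlargement paragraph, deleting a terminal segment $p_{j+1}\cdots p_r$ leaves $S\cup\{p_1,\dots,p_j\}$ strong only if $p_j$ can get back to $S$. The same problem arises for initial segments.
\item A ``closed structure'' in $R-V(Q)$ with no arcs to or from $S$ cannot be added to $S$ to give a larger strong subdigraph. On its own it may be smaller than $S$.
\item Your longest dipath $L$ of $R=D-V(S)$ is not anchored at $S$. All the back-arcs of $\ell_t$ and $\ell_1$ that you extract may land inside $L$, giving no interaction with $S$ at all.
\item A cycle of length $m+2$ is not Mader's first step. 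What is needed is a long dipath attached to the maximal strong piece: an $H$-leaving or $H$-entering dipath of order $m+2$. Even this requires routing through the deleted path $P$ itself when $H$ is isolated in $D-V(P)$ (Case~2 of Lemma~\ref{lem:leaving-H}).
\end{itemize}

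The missing idea is a second extremal choice that makes every window shift self-defeating. The paper takes an $H$-leaving dipath $Q=q_0q_1\cdots q_{m+1}$ with $q_0\in V(H)$. It pairs this with a return dipath $Z$ from $V(Q)\setminus V(H)$ to $V(H)$. The pair $(Q,Z)$ is chosen to maximize $\ell(Z)$, and subject to that, the index $s$ of the start $q_s$ of $Z$ on $Q$. Then $W_0=V(H)\cup V(Z)\cup V(Q[q_0,q_s])$ induces a strong subdigraph larger than $H$, so no dipath of order $m$ may avoid it.

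Next, grow a longest dipath $R=q_1r_2\cdots r_t$ from $q_1$ outside $V(H)\cup V(Z)\cup V(Q)$. One shows three things:
\begin{itemize}
\item $r_t$ has no out-neighbour in $V(H)\cup(V(Z)\setminus\{q_s\})$; otherwise a bigger strong set avoids $q_2\cdots q_{m+1}$.
\item $2\le t\le m$; otherwise $r_2\cdots r_{m+1}$ avoids $W_0$.
\item $(r_t,q_j)\notin A(D)$ for $2\le j\le t$. Here the shifted window $r_2\cdots r_tq_j\cdots q_{j+m-t}$ either avoids $W_0$, or rerouting $Q$ along $q_0q_1r_2\cdots r_tq_j\cdots$ produces a pair with a longer return path or a later start of $Z$.
\end{itemize}
Counting then gives $d_D^+(r_t)\le (t-2)+(m+2-t)=m$, a contradiction. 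Without an anchored long path, a return path, and this lexicographic choice, your window-shifting step has no mechanism to rule out the bad configurations.
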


The bound in Theorem~\ref{thm:main} is best possible, as shown by the following construction, which is implicit in Mader~\cite{Mader12}.
For $n\ge 2$, let $X_1,\ldots,X_{2n}$ be pairwise vertex-disjoint complete digraphs such that $|V(X_i)|=1$ for odd $i$ and $|V(X_i)|=m$ for even $i$.
Define $D_{m,n}$ from their disjoint union by adding all arcs from $V(X_i)$ to $V(X_{i+1})$ for every $i$, where the indices are taken modulo $2n$.
Then $D_{m,n}$ is strongly connected.
Moreover, every vertex has indegree and outdegree exactly $m$, so $\delta^0(D_{m,n})=m$.
Let $P$ be a dipath of order $m$ in $D_{m,n}$.
If $P$ contains the vertex of $X_i$ for some odd $i$, then $D_{m,n}-V(P)$ is not strongly connected because $P$ cannot contain all vertices of either $X_{i-1}$ or $X_{i+1}$, while every dipath from a vertex of $X_{i-1}$ to a vertex of $X_{i+1}$ contains the vertex of $X_i$.
If $P$ contains no vertex in any odd-indexed set, then $P$ is contained in one even-indexed set and hence contains all its vertices, again leaving a digraph that is not strongly connected.

The problem of preserving connectivity after deleting the edges of a subgraph has also been studied for undirected graphs.
Hasunuma~\cite{Hasunuma23,Hasunuma25} considered paths, trees, and Hamiltonian cycles in this setting, while matchings have recently been investigated in~\cite{CKP26,Chu26}.
We consider the analogous arc-deletion problem for digraphs and propose the following counterpart of Conjecture~\ref{conj:mader-digraph}.

\begin{conjecture}\label{conj:arc-deletion}
Let $k\ge1$ and $m\ge2$ be integers.
Every $k$-strong digraph $D$ with $\delta^0(D)\ge\max\{k+1,m-1\}$ contains a dipath $P$ of order $m$ such that $D-A(P)$ is $k$-strong.
\end{conjecture}

The case $m=2$ of Conjecture~\ref{conj:arc-deletion} directly follows from a result of Mader~\cite{Mader85}.
The bound $\max\{k+1,m-1\}$ in Conjecture~\ref{conj:arc-deletion} is best possible.
When $2\le m\le k+1$, the complete digraph on $k+1$ vertices is $k$-strong and has minimum semidegree $k$, but deleting any of its arcs results in a digraph that is not $k$-strong.
When $m\ge k+2$, the complete digraph on $m-1$ vertices is $k$-strong and has minimum semidegree $m-2$, but contains no dipath of order $m$.

We next relate the two conjectures.
Conjecture~\ref{conj:mader-digraph}, if true, would imply an arc-deletion result with a weaker semidegree bound.
Since Theorem~\ref{thm:main} establishes Conjecture~\ref{conj:mader-digraph} for $k=1$, we obtain the following result, thereby confirming Conjecture~\ref{conj:arc-deletion} in this case.

\begin{corollary}\label{cor:arc-deletion}
Let $m\ge2$ be an integer.
Every strongly connected digraph $D$ with $\delta^0(D)\ge\max\{2,m-1\}$ contains a dipath $P$ of order $m$ such that $D-A(P)$ is strongly connected.
\end{corollary}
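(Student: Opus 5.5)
The plan is to derive the corollary directly from Theorem~\ref{thm:main}. We apply the theorem to a shorter dipath $Q$ whose vertex deletion keeps $D$ strong, and then extend $Q$ by one or two arcs into the strong digraph $D-V(Q)$. The point is that deleting only the arcs of a dipath is much weaker than deleting its vertices. Each vertex of $Q$ will retain an arc to and an arc from the strong part $D-V(Q)$, so strong connectivity survives.

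\emph{Case $m\ge 3$.} Set $m'=m-2\ge 1$. Since $\delta^0(D)\ge m-1=m'+1$, Theorem~\ref{thm:main} gives a dipath $Q=q_1q_2\cdots q_{m-2}$ such that $D'=D-V(Q)$ is strongly connected. Every $q_i$ has at most $m-3$ neighbours inside $V(Q)$. It therefore has at least $(m-1)-(m-3)=2$ out-neighbours and at least $2$ in-neighbours in $V(D')$.
\begin{enumerate}[label=(\roman*)]
\item We choose an out-neighbour $y\in V(D')$ of $q_{m-2}$.
\item We then choose an in-neighbour $x\in V(D')$ of $q_1$ with $x\ne y$, which is possible because $q_1$ has two such in-neighbours.
\item We set $P=xq_1\cdots q_{m-2}y$; this is a dipath of order $m$.
\end{enumerate}
In $D-A(P)$ the subdigraph $D'$ is untouched and hence strongly connected. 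Each $q_i$ is the tail of at most one arc of $P$ entering $V(D')$, namely the arc $q_{m-2}y$ when $i=m-2$. So $q_i$ still has an out-arc into $V(D')$ in $D-A(P)$. Symmetrically, $q_i$ is the head of at most one arc of $P$ leaving $V(D')$, namely $xq_1$, so it still has an in-arc from $V(D')$. Consequently every vertex of $Q$ reaches $D'$ and is reached from $D'$, and $D-A(P)$ is strongly connected.

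\emph{Case $m=2$.} Here $\delta^0(D)\ge 2$, so Theorem~\ref{thm:main} with $m=1$ yields a vertex $v$ such that $D-v$ is strongly connected. Let $w$ be an out-neighbour of $v$ and put $P=vw$. In $D-vw$, the vertex $v$ still has an out-neighbour in $D-v$, since its outdegree is at least $2$. It also still has all of its (at least two) in-neighbours in $D-v$. Together with the strong connectivity of $D-v$, this shows that $D-A(P)$ is strongly connected.

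I do not expect a genuine obstacle, since all the difficulty sits in Theorem~\ref{thm:main}. The only delicate points are the degree count guaranteeing two neighbours outside $Q$ on each side, which is needed both to make $x\ne y$ and to keep a spare arc after deleting $A(P)$, and the separate treatment of $m=2$, where $m-2=0$ is not allowed in Theorem~\ref{thm:main}.
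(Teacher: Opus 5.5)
Your proof is correct. For $m\ge3$ it is exactly the paper's argument: Theorem~\ref{thm:main} applied with order $m-2$, followed by Proposition~\ref{prop:arc-deletion} specialized to $k=1$. The details match as well: the same count $(m-1)-(m-3)=2$, the same choice of distinct $x,y$, and the same observation that $D-V(Q)$ is untouched while each $q_i$ keeps an arc to and from it.

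The only divergence is at $m=2$. The paper simply invokes Mader's 1985 result, which settles the $m=2$ case of Conjecture~\ref{conj:arc-deletion} for every $k$ with the sharp bound $k+1$. You instead apply Theorem~\ref{thm:main} with $m=1$ and delete one out-arc of the removable vertex $v$. This is valid because that arc does not lie in $D-v$, and $v$ keeps an out-neighbour and all its in-neighbours there.

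Your version makes the corollary self-contained within the paper. Generalized to $k$-strong digraphs via Mader's vertex-deletion theorem, the same trick would only give the bound $2k$. For $k=1$, however, this equals $k+1=2$, so nothing is lost here.
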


The rest of the paper is organized as follows.
Section~\ref{sec:preliminaries} introduces notation and preliminary tools.
Section~\ref{sec:proof} is devoted to the proof of Theorem~\ref{thm:main}, and Section~\ref{sec:arc-deletion} establishes the arc-deletion results leading to Corollary~\ref{cor:arc-deletion}.

\section{Notation and preliminaries}\label{sec:preliminaries}

We use standard terminology for digraphs and assume throughout that all digraphs are finite, loopless, and have no parallel arcs, while oppositely directed arcs are allowed.
For $v\in V(D)$, let $N_D^+(v)$ and $N_D^-(v)$ denote its out- and in-neighborhoods, and let $d_D^+(v)=|N_D^+(v)|$ and $d_D^-(v)=|N_D^-(v)|$.
We write $\delta^+(D)=\min_{v\in V(D)}d_D^+(v)$, $\delta^-(D)=\min_{v\in V(D)}d_D^-(v)$, and $\delta^0(D)=\min\{\delta^+(D),\delta^-(D)\}$, where the last parameter is the \emph{minimum semidegree} of $D$.
For $X\subseteq V(D)$, let $D[X]$ be the subdigraph induced by $X$, and write $D-X:=D[V(D)\setminus X]$.
For $A\subseteq A(D)$, we write $D-A$ for the subdigraph of $D$ obtained by deleting all arcs in $A$.
Recall that $D$ is \emph{$k$-strong} if $|V(D)|\ge k+1$ and $D-X$ is strongly connected for every $X\subseteq V(D)$ with $|X|<k$.

For a dipath $P=v_1v_2\cdots v_n$ of order $n$, its \emph{length} is $\ell(P)=n-1$, and its \emph{initial} and \emph{terminal} vertices are $v_1$ and $v_n$, respectively.
For $1\le i\le j\le n$, we write $P[v_i,v_j]$ for the subdipath $v_i v_{i+1}\cdots v_j$.
We regard an arc $(u,v)$ as a dipath of order $2$.
If $P_1,\ldots,P_r$ are dipaths such that the terminal vertex of $P_i$ is the initial vertex of $P_{i+1}$ for each $i<r$, and no other vertex belongs to more than one $P_i$, then we write $P_1+\cdots+P_r$ for their concatenation. In particular, $\ell(P_1+\cdots+P_r)=\sum_{i=1}^r\ell(P_i)$.
For disjoint sets $S,T\subseteq V(D)$, an \emph{$(S,T)$-dipath} is a dipath whose initial vertex is its only vertex in $S$ and whose terminal vertex is its only vertex in $T$.

For a strong component $C$ of $D$, we call $C$ a \emph{sink component} if no arc of $D$ leaves $V(C)$, and a \emph{source component} if no arc of $D$ enters $V(C)$. 

The \emph{reverse} $D^{\mathrm{rev}}$ is obtained from $D$ by reversing every arc.
Reversing all arcs in a digraph preserves the minimum semidegree and strong connectivity, maps each dipath to a dipath of the same order, and interchanges sink and source components.

Let $P$ be a dipath of $D$ with initial vertex $x$ and terminal vertex $y$.
For a subdigraph $H$ of $D$, $P$ is said to be \emph{$H$-leaving} if $V(P)\cap V(H)=\{x\}$, and \emph{$H$-entering} if $V(P)\cap V(H)=\{y\}$.
Reversing all arcs in a digraph interchanges $H$-leaving and $H$-entering dipaths.

The following endpoint-counting lemma requires only a minimum outdegree condition and will be used repeatedly in the proof of Theorem~\ref{thm:main}.

\begin{lemma}\label{lem:count}
Let $m$ be a positive integer, let $D$ be a digraph with $\delta^+(D)\ge m+1$, and let $P=v_1v_2\cdots v_m$ be a dipath of order $m$ in $D$.
For a nonempty subset $X$ of $V(D)\setminus V(P)$ such that $N_D^+(x)\subseteq X\cup V(P)$ for every $x\in X$, suppose that $R$ is a longest dipath in $D[X]$ with a prescribed initial vertex, and let $r$ be its terminal vertex.
Then \[ 
\ell(R) \ge a,
\] 
where $a:=\min\bigl(\{i\in\{1,\ldots,m\}:v_i\in N_D^+(r)\}\cup\{m+1\}\bigr)$.
\end{lemma}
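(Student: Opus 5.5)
The plan is a standard longest-path (Pósa-type) counting argument applied to the terminal vertex $r$ of $R$. Write $R=u_0u_1\cdots u_s$ with $u_0$ the prescribed initial vertex, $u_s=r$ and $s=\ell(R)$. The key observation is that every out-neighbour of $r$ lies either on $R$ or in $V(P)$. Indeed, $r\in X$, so by hypothesis $N_D^+(r)\subseteq X\cup V(P)$. If some out-neighbour $w\in X$ of $r$ were not on $R$, then $R+rw$ would be a longer dipath in $D[X]$ with the same initial vertex, contradicting the maximality of $R$. Hence
\[
N_D^+(r)\subseteq \bigl(V(R)\setminus\{r\}\bigr)\cup V(P),
\]
where we also use that $D$ is loopless and that $X\cap V(P)=\emptyset$, so the two parts are disjoint.

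Next we count. By the definition of $a$, the vertex $r$ has no out-neighbour among $v_1,\ldots,v_{a-1}$. Consequently $|N_D^+(r)\cap V(P)|\le m-(a-1)=m-a+1$. This covers the case $a=m+1$ as well, where the bound is $0$. Moreover $|N_D^+(r)\cap (V(R)\setminus\{r\})|\le s$. Combining these with $d_D^+(r)\ge m+1$ gives
\[
m+1\le s+(m-a+1),
\]
that is, $s\ge a$, which is the claim.

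I expect no real obstacle here. The only points needing care are the two justifications above: that $X$ is closed under out-neighbours except for arcs into $V(P)$, so that maximality of $R$ confines the rest of $N_D^+(r)$ to $V(R)$, and that the definition of $a$ as a minimum (with the default value $m+1$) correctly excludes the first $a-1$ vertices of $P$.
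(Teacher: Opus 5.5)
Your proposal is correct and follows essentially the same argument as the paper. Maximality of $R$ confines $N_D^+(r)$ to $(V(R)\setminus\{r\})\cup V(P)$, and the definition of $a$ gives $|N_D^+(r)\cap V(P)|\le m-a+1$; combining these with $d_D^+(r)\ge m+1$ yields $\ell(R)\ge a$.
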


\begin{proof}
By the maximality of $R$, $r$ has no out-neighbor in $X\setminus V(R)$. 
Therefore $N_D^+(r)\subseteq V(R)\cup V(P)$.
By the definition of $a$, we have $|N_D^+(r)\cap V(P)|\le m-a+1$.
Since $D$ has no loops, $r\notin N_D^+(r)$, and hence $|N_D^+(r)\cap V(R)|\le\ell(R)$.
It follows that
\[
m+1\le\delta^+(D)\le d_D^+(r)\le\ell(R)+(m-a+1),
\]
which gives $\ell(R)\ge a$.
\end{proof}

\section{Proof of Theorem~\ref{thm:main}}\label{sec:proof}

Throughout this section, let $m$ be a positive integer and let $D$ be a strongly connected digraph with $\delta^0(D)\ge m+1$.
The terminal vertex of a longest dipath in $D$ has all its out-neighbors on the dipath, and hence $D$ contains a dipath of order at least $\delta^+(D)+1\ge m+2$.
In particular, $|V(D)|\ge m+2$, and $D-V(P)$ is nonempty for every dipath $P$ of order $m$.

\begin{definition}\label{def:$m$-max-pair}
An \emph{$m$-max pair} of $D$ is a pair $(P,H)$ such that $P$ is a dipath of order $m$, $H$ is a strong component of $D-V(P)$, and $|V(H)|$ is maximum among all such pairs.
\end{definition}

An $m$-max pair exists by the preceding observation.
Moreover, $(P,H)$ is an $m$-max pair of $D$ if and only if the corresponding reversed pair is an $m$-max pair of $D^{\mathrm{rev}}$.

The following lemma gives a piece of what we referred to in Section~1 as the first step observed by Mader \cite{Mader12}. Since Mader did not provide the details, we include a proof here.

\begin{lemma}\label{lem:leaving-H}
Let $D$ be a strongly connected digraph with $\delta^0(D)\ge m+1$, and let $(P,H)$ be an $m$-max pair of $D$ such that $V(H)\ne V(D)\setminus V(P)$.
Then $D$ has an $H$-leaving or an $H$-entering dipath of length at least $m+2$.
\end{lemma}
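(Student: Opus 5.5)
The plan is to locate a strong component $C\neq H$ of $D-V(P)$ that is a sink component of $D-V(P)$, reducing to this case by reversal. Then Lemma~\ref{lem:count} gives a long dipath inside $C$, and we extend it backwards to $H$. The extremality of the $m$-max pair is used to rule out the configurations where the resulting dipath is too short.

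\emph{Step 1 (reduction).} Consider the acyclic condensation of $D-V(P)$. Suppose $H$ were both the unique sink component and the unique source component. Then every strong component would be reachable from $H$ and would reach $H$, so it would lie in $H$, contradicting $V(H)\ne V(D)\setminus V(P)$. Hence there is a sink component $C\ne H$ or a source component $C\ne H$. Reversal interchanges sink/source components and $H$-leaving/$H$-entering dipaths, and it preserves $m$-max pairs. So we may assume $C\neq H$ is a sink component of $D-V(P)$, and we aim for an $H$-leaving dipath. Put $X=V(C)$. Then $N_D^+(x)\subseteq X\cup V(P)$ for all $x\in X$, so Lemma~\ref{lem:count} applies to $X$. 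Note also that no arc goes from $C$ to $H$.

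\emph{Step 2 (reaching $C$ from $H$).} Since $D$ is strongly connected, there is a dipath from $V(H)$ to $V(C)$. Take a shortest one, $Q$. It may leave $H$ into $P$ or into other components of $D-V(P)$; in either case it meets $H$ only at its initial vertex. Let $c$ be its terminal vertex in $C$. Let $R$ be a longest dipath of $D[X]$ with initial vertex $c$, and let $r$ be its terminal vertex. Let $a$ be the smallest index with $v_a\in N_D^+(r)$, or $a=m+1$ if there is none. Lemma~\ref{lem:count} gives $\ell(R)\ge a$.

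\emph{Step 3 (building the long path).} There are two candidate $H$-leaving dipaths.
\begin{itemize}[nosep]
\item $Q+R$, when $Q$ avoids $P$ or uses only a short piece of it.
\item $Q+R+rv_a+P[v_a,v_m]$, or this dipath with $P[v_a,v_m]$ trimmed, when the vertices of $Q\cap V(P)$ do not obstruct the concatenation.
\end{itemize}
Using $\ell(R)\ge a$, the second has length at least $\ell(Q)+a+1+(m-a)\ge m+2$ as soon as $\ell(Q)\ge1$. If $Q$ passes through some $v_j$ with $j\ge a$, we instead take $Q[\,\cdot\,,v_j]$ and reroute, which again makes the length count work out.

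\emph{Main obstacle.} The case I expect to be hardest is when $Q$ meets $P$ in a way that blocks both concatenations, for instance entering $P$ at $v_j$ with $j<a$, running along $P$, and leaving into $C$, while $r$ only sends arcs to early vertices of $P$. There the plan is to use maximality of $|V(H)|$. The idea is to form a new dipath $P'$ of order $m$ from the last vertices of $R$, the arc $rv_a$, and $P[v_a,v_m]$, with the tail of $R$ chosen so that $|V(P')|=m$. This frees $v_1,\dots,v_{a-1}$ while keeping $V(P')\cap V(H)=\emptyset$, so $H$ remains inside a strong component of $D-V(P')$. If the assumed failure of a long $H$-leaving dipath forces some freed vertex to lie on a cycle through $H$ in $D-V(P')$, that component is strictly larger than $H$, contradicting that $(P,H)$ is an $m$-max pair. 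Making this precise, that is, showing the freed prefix of $P$ or the unused part of $R$ really joins $H$ into a strong component, is the step I expect to need the most care.
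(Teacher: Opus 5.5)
Your proposal has a genuine gap. The configuration you flag as the main obstacle is exactly the hard case, and it is left unresolved, as is the ``reroute'' in Step 3. Part of the difficulty comes from Step 1. There you keep only the existence of \emph{some} sink component $C\neq H$ and discard reachability. The paper instead splits on whether $H$ is both a sink and a source component of $D-V(P)$. If it is not, reverse so that $H$ is not a sink component. Then some sink component $C'\neq H$ is reachable from $H$ \emph{within} $D-V(P)$, and a shortest such dipath $Q_0$ in $D-V(P)$ avoids $P$ altogether. So $Q_0+R$ (when $a=m+1$) or $Q_0+R+(r,v_a)+P[v_a,v_m]$ (when $a\le m$) has length at least $m+2$ with no obstruction. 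Taking an arbitrary sink $C$ and a shortest dipath in all of $D$ creates an interference with $P$ that never needs to arise in this case.

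What remains is the case where $H$ is isolated in $D-V(P)$, i.e.\ no arcs join $H$ and $C:=D-(V(H)\cup V(P))$. Every dipath leaving $H$ must then enter $P$ first, and this is where your sketch breaks. Replacing $P$ by the tail of $R$ plus $(r,v_a)+P[v_a,v_m]$ frees $v_1,\dots,v_{a-1}$, but nothing brings the path back to $H$. Arcs out of $C$ go only into $C\cup V(P)$, and there is no reason for an arc from the freed prefix (or from the unused start of $R$) into $H$. So the component containing $H$ need not grow.

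The missing idea is to fix an arc $(h^*,v_p)$ from $H$ into $P$, take $R$ as a longest dipath in $D[V(C)]$ (Lemma~\ref{lem:count} applies to $X=V(C)$), and split on whether $v_m$ has an out-neighbour in $H$.
\begin{itemize}
\item If $(v_m,h)\in A(D)$, then $D[V(H)\cup V(P[v_p,v_m])]$ is strongly connected. For $a=m+1$, the first $m$ vertices of $R$ form a dipath inside $C$ whose deletion leaves a component larger than $H$, a contradiction. For $a\le m$, the dipath $R+(r,v_a)+P[v_a,v_m]+(v_m,h)$ is $H$-\emph{entering} of length at least $m+2$.
\item Otherwise $v_m$ has an out-neighbour $r_0\in V(C)$. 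Then $(h^*,v_p)+P[v_p,v_m]+(v_m,r_0)+R$, followed by $(r,v_a)+P[v_a,v_{p-1}]$ when $a<p$, is $H$-leaving of length at least $m+2$.
\end{itemize}
The first subcase genuinely produces an $H$-entering dipath. Your decision in Step 1 to always aim for an $H$-leaving dipath would therefore also have to be revisited.
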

\begin{proof}
Let $P=v_1v_2\cdots v_m$.
Whenever we apply Lemma~\ref{lem:count}, we denote the resulting longest dipath by $R$, its terminal vertex by $r$, and the index defined in that lemma by $a$, so that $\ell(R)\ge a$.

\setcounter{case}{0}
\begin{case}
$H$ is not both a sink component and a source component of $D-V(P)$.
\end{case}
By symmetry under arc reversal, we may assume that $H$ is not a sink component of $D-V(P)$.
Then there exists a sink component $C'\ne H$ of $D-V(P)$ that is reachable from $H$.
Let $Q_0$ be a shortest dipath in $D-V(P)$ from $V(H)$ to $V(C')$, and let $c$ be its terminal vertex.
Then $Q_0$ is a $(V(H),V(C'))$-dipath and $\ell(Q_0)\ge 1$.
Since no arc of $D-V(P)$ leaves $V(C')$, we may apply Lemma~\ref{lem:count} with $X=V(C')$ and prescribed initial vertex $c$.
Thus the concatenation $Q_0+R$ is an $H$-leaving dipath.
If $a=m+1$, then $\ell(Q_0+R)=\ell(Q_0)+\ell(R)\ge 1+(m+1)=m+2$, so $Q_0+R$ is a desired dipath.
If $a\le m$, then $v_a\in N^+_D(r)$, so $Q_0+R+(r,v_a)+P[v_a,v_m]$ is an $H$-leaving dipath of length at least $1+a+1+(m-a)=m+2$.

\begin{case}
 $H$ is both a sink component and a source component of $D-V(P)$.
\end{case}
Let $C=D-(V(H)\cup V(P))$.
Since $V(H)\ne V(D)\setminus V(P)$, $V(C)\ne\emptyset$.
No arc of $D$ joins $V(H)$ and $V(C)$ in either direction.
Consequently, $N_D^+(x)\subseteq V(C)\cup V(P)$ for every $x\in V(C)$.
Since $D$ is strongly connected and $H$ is a sink component of $D-V(P)$, there exists an arc $(h^*,v_p)$ of $D$ for some $h^*\in V(H)$ and $p\in\{1,\ldots,m\}$.

\begin{subcase}
 $(v_m,h)\in A(D)$ for some $h\in V(H)$. 
\end{subcase}
We apply Lemma~\ref{lem:count} with $X=V(C)$ and an arbitrary prescribed initial vertex.
If $a\le m$, then $v_a\in N_D^+(r)$, so $L:=R+(r,v_a)+P[v_a,v_m]+(v_m,h)$ is an $H$-entering dipath with $\ell(L)=\ell(R)+1+(m-a)+1\ge m+2$.
Suppose $a=m+1$.
Then $\ell(R)\ge m+1$, so the initial segment $P^*$ of $R$ of order $m$ is well-defined and satisfies $V(P^*)\subseteq V(C)$.
Note that the arcs $(h^*,v_p)$ and $(v_m,h)$, together with the strong connectivity of $H$, imply that $D[V(H)\cup V(P[v_p,v_m])]$ is strongly connected.
Since $V(H)\cup V(P[v_p,v_m])$ is disjoint from $V(P^*)$, $D-V(P^*)$ has a strong component $W$ such that $W$ contains $V(H)\cup V(P[v_p,v_m])$.
This contradicts the choice of the $m$-max pair $(P,H)$ since $|V(H)\cup V(P[v_p,v_m])|=|V(H)|+(m-p+1)>|V(H)|$.

\begin{subcase}
$N_D^+(v_m)\cap V(H)=\emptyset$.
\end{subcase}
Then $N_D^+(v_m)\subseteq V(P)\cup V(C)$.
Since $|N_D^+(v_m)\cap V(P)|\le m-1<m+1\le d_D^+(v_m)$, there is an arc $(v_m,r_0)$ of $D$ for some $r_0\in V(C)$.
We apply Lemma~\ref{lem:count} with $X=V(C)$ and prescribed initial vertex $r_0$.
The sets $\{h^*\}$, $V(P)$, and $V(R)$ are pairwise disjoint, and hence $L:=(h^*,v_p)+P[v_p,v_m]+(v_m,r_0)+R$ is an $H$-leaving dipath of length $(m-p)+2+\ell(R)$.
If $a\ge p$, then $\ell(L)\ge(m-p)+2+a\ge m+2$.
Suppose that $a<p$.
Then $v_a\in N_D^+(r)$ and $V(P[v_a,v_{p-1}])$ is disjoint from $V(P[v_p,v_m])\cup V(R)\cup\{h^*\}$.
Consequently, $L':=L+(r,v_a)+P[v_a,v_{p-1}]$ is an $H$-leaving dipath, and $\ell(L')=(m-p)+2+\ell(R)+1+(p-1-a)\ge m+2$.
\end{proof}

The preceding lemma proves slightly more than is needed, since an $H$-leaving or an $H$-entering dipath of length $m+1$ suffices in the proof of Theorem~\ref{thm:main}. We are now ready to prove our main theorem.

\begin{proof}[Proof of Theorem~\ref{thm:main}]
Let $D$ be a strongly connected digraph with $\delta^0(D)\ge m+1$ and let $(P,H)$ be an $m$-max pair of $D$.
If $V(H)=V(D)\setminus V(P)$, then $D-V(P)$ is strongly connected, so we are done.
Suppose to the contrary that $V(H)\ne V(D)\setminus V(P)$.
By Lemma~\ref{lem:leaving-H}, $D$ contains an $H$-leaving or an $H$-entering dipath of length at least $m+2$.
Since the conclusion of the theorem is invariant under reversing all arcs, we may assume that $D$ contains an $H$-leaving dipath of length at least $m+2$.

Among all pairs $(Q,Z)$ such that $Q$ is an $H$-leaving dipath of order $m+2$ and $Z$ is a $(V(Q)\setminus V(H),V(H))$-dipath, we choose one for which $\ell(Z)$ is maximum, and subject to this, for which the initial vertex of $Z$ has the largest possible index on $Q$.
Indeed, since $D$ is strongly connected, there exists a $(V(Q)\setminus V(H),V(H))$-dipath.
Write $Q=q_0q_1\cdots q_{m+1}$, so that $V(Q)\cap V(H)=\{q_0\}$.
Let $q_s$ be the initial vertex of $Z$, and let $z^*\in V(H)$ be its terminal vertex.
By the definition of a $(V(Q)\setminus V(H),V(H))$-dipath,
\begin{equation}\label{eq:Z-meets}
V(Z)\cap V(Q)\subseteq\{q_s,q_0\}
\qquad\text{and}\qquad
V(Z)\cap V(H)=\{z^*\}.
\end{equation}
Let $W=V(H)\cup V(Z)\cup V(Q)$.
Since $q_1\notin V(H)$, let $R=q_1r_2\cdots r_t$ be a longest dipath with initial vertex $q_1$ in $D-(W\setminus\{q_1\})$.
The case $s=1$, in which $q_1\in V(Z)$, is allowed.
By the choice of $R$,
\begin{equation}\label{eq:R-meets}
V(R)\cap W=\{q_1\}.
\end{equation}

\begin{claim}\label{clm:containment}
$N_D^+(r_t)\subseteq V(R)\cup\{q_1,\ldots,q_{m+1}\}$.
\end{claim}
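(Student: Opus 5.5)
Since $R$ is a longest dipath starting at $q_1$ in $D-(W\setminus\{q_1\})$, every out-neighbor of $r_t$ outside $W$ already lies on $R$. This is the argument from Lemma~\ref{lem:count}: otherwise $R$ could be extended. Moreover
\[
W\setminus\{q_1,\ldots,q_{m+1}\}=V(H)\cup\bigl(V(Z)\setminus\{q_s\}\bigr),
\]
because $q_0\in V(H)$ and, by \eqref{eq:Z-meets}, $V(Z)\cap V(Q)\subseteq\{q_s,q_0\}$. So the claim reduces to showing that $r_t$ has no out-neighbor $y\in V(H)\cup(V(Z)\setminus\{q_s\})$. I would argue this by contradiction, using the extremal choice of $(Q,Z)$.

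Suppose such a $y$ exists. If $y\in V(H)$, set $Z':=R+(r_t,y)$. If $y\in V(Z)\setminus\{q_s\}$, set $Z':=R+(r_t,y)+Z[y,z^*]$; when $y=z^*$ this again ends at once in $V(H)$. By \eqref{eq:R-meets} and \eqref{eq:Z-meets}, $Z'$ is a dipath whose only vertex in $V(Q)\setminus V(H)$ is $q_1$ and whose only vertex in $V(H)$ is its terminal vertex. Hence $(Q,Z')$ is an admissible pair, and the maximality of $\ell(Z)$ gives $\ell(Z')\le\ell(Z)$. The goal is to contradict this, either by showing $\ell(Z')>\ell(Z)$ or by exhibiting an admissible pair that beats $(Q,Z)$ under the secondary rule on the index $s$.

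I would split on $s$.

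\emph{Case $s\ge2$.} I would not keep $Q$. Instead I would reroute it: splice $R$ and a tail of $Z$ into a new $H$-leaving dipath $Q'$ of order $m+2$ starting $q_0q_1\cdots$. The unused vertices of $Q$, together with the arc of $Z$ leaving $q_s$, should then yield a $(V(Q')\setminus V(H),V(H))$-dipath that is strictly longer than $Z$, or equally long but starting at a vertex of larger index on $Q'$.

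\emph{Case $s=1$.} Here $q_1=q_s$ and $R$ starts on $Z$. If $y\in V(H)$ or $y$ lies near $z^*$ on $Z$, then $Z'$ competes directly with $Z$. To get a strict inequality I would use the degree bound $d^+_D(r_t)\ge m+1$ together with the endpoint-counting idea of Lemma~\ref{lem:count}. This should force either $\ell(R)$ to be large, and hence $\ell(Z')>\ell(Z)$, or an arc from $r_t$ to some $q_i$. In the latter case I would reroute $Q$ through $R$ so that $Z$ (or $Z'$) starts at a higher index, contradicting the tie-break.

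The main obstacle is the comparison $\ell(Z')$ versus $\ell(Z)$ when $y$ is an interior vertex of $Z$. Then $Z'$ discards the segment $Z[q_s,y]$, so $Z'$ need not be longer than $Z$. The real work is to choose the rerouted $Q'$ correctly, so that the discarded segment of $Z$ is absorbed into $Q'$ and the lexicographic choice of $(\ell(Z),s)$ is violated. I would first try taking $Q'=q_0\,Z[q_s,y]$-type reroutings for $s\ge2$, and the rerouting $q_0+R$ for $s=1$, checking the order $m+2$ via Lemma~\ref{lem:count}.
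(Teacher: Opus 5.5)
Your reduction is correct: it suffices to rule out an out-neighbor $y\in V(H)\cup(V(Z)\setminus\{q_s\})$ of $r_t$, and $(Q,Z')$ is an admissible pair. The gap is in where you look for the contradiction. The lexicographic choice of $(\ell(Z),s)$ cannot supply it, because nothing bounds $\ell(Z)$ from above. Already for $y\in V(H)$ you get only $\ell(Z')=t$, which can be far shorter than $Z$. Your rerouting steps for $s\ge2$ and $s=1$ are also only sketched (``should then yield'').

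No argument that uses only the choice of $(Q,Z)$, the degree bound and Lemma~\ref{lem:count} can succeed, because the claim is false when $H$ is merely a strong subdigraph. Take the complete digraph on $n\ge m+2$ vertices, $H=\{q_0\}$, any $Q=q_0q_1\cdots q_{m+1}$, and $Z$ running from $q_{m+1}$ through all vertices outside $V(Q)$ to $q_0$. This pair is optimal for both criteria. Then $W=V(D)$, so $R=q_1$ and $t=1$, and $r_t=q_1$ has the out-neighbor $q_0\in V(H)$.

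The missing ingredient is the maximality of $|V(H)|$ in the $m$-max pair $(P,H)$, which your proposal never invokes. Suppose $r_t$ has such an out-neighbor $y$. Put $B=V(H)\cup V(R)$ if $y\in V(H)$, and $B=V(H)\cup V(R)\cup V(Z[y,z^*])$ otherwise. Then:
\begin{enumerate}[label=\textup{(\alph*)},leftmargin=*,itemsep=1pt]
\item $D[B]$ is strongly connected: enter $R$ from $H$ via the arc $(q_0,q_1)$, and return to $H$ via $(r_t,y)$ followed by $Z[y,z^*]$.
\item $|B|\ge|V(H)|+t>|V(H)|$.
\item $B\cap V(Q)\subseteq\{q_0,q_1\}$, by \eqref{eq:Z-meets}, \eqref{eq:R-meets} and $y\ne q_s$.
\end{enumerate}
Hence $q_2q_3\cdots q_{m+1}$ is a dipath of order $m$ disjoint from $B$. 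So $D-\{q_2,\ldots,q_{m+1}\}$ has a strong component of order greater than $|V(H)|$, contradicting the choice of $(P,H)$.

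This is the paper's argument. It needs no case split on $s$ and does not use the extremal choice of $(Q,Z)$ at all; that choice is only needed later, in Claim~\ref{clm:rotation}.
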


\begin{proof}
Suppose that $y\in N_D^+(r_t)$ for some $y\in V(H)\cup(V(Z)\setminus\{q_s\})$.
Let
\[
B=
\begin{cases}
V(H)\cup V(R),&\text{if }y\in V(H),\\
V(H)\cup V(R)\cup V(Z[y,z^*]),&\text{if }y\in V(Z)\setminus\{q_s\}.
\end{cases}
\]
Since $H$ is strongly connected, $D[B]$ is strongly connected.
By~\eqref{eq:R-meets}, we have $V(R)\cap V(H)=\emptyset$, and hence $|B|\ge|V(H)|+t>|V(H)|$.
Moreover, $B\cap V(Q)\subseteq\{q_0,q_1\}$ by~\eqref{eq:Z-meets} and~\eqref{eq:R-meets}.
Thus $q_2q_3\cdots q_{m+1}$ is a dipath of order $m$ whose vertex set is disjoint from $B$.
It follows that $D-\{q_2,q_3,\ldots,q_{m+1}\}$ has a strong component of order greater than $|V(H)|$, which contradicts the maximality of $|V(H)|$.
Therefore $N_D^+(r_t)\cap\bigl(V(H)\cup(V(Z)\setminus\{q_s\})\bigr)=\emptyset$.
By the maximality of $R$, $r_t$ has no out-neighbor outside $V(R)$ in $D-(W\setminus\{q_1\})$.
Hence, by~\eqref{eq:Z-meets}, we have $N_D^+(r_t)\subseteq V(R)\cup\{q_1,\ldots,q_{m+1}\}$.
\end{proof}

Let $W_0:=V(H)\cup V(Z)\cup V(Q[q_0,q_s])$ be a subset of $W$.
Since $q_1\in W_0\setminus V(H)$, by the orientations of $Q$ and $Z$, the following is immediate.
\begin{observation}\label{obv:W}
The digraph $D[W_0]$ is strongly connected and $|W_0|>|V(H)|$.
\end{observation}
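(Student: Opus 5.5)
We need to prove Observation~\ref{obv:W}: $D[W_0]$ is strongly connected and $|W_0|>|V(H)|$. The plan is to show that every vertex of $W_0$ both reaches $V(H)$ and is reached from $V(H)$ inside $D[W_0]$. Since $H$ is itself strongly connected, this gives strong connectivity of $D[W_0]$.

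First we check that the relevant paths lie in $W_0$. The subpath $Q[q_0,q_s]$ starts at $q_0\in V(H)$, and its vertices are in $W_0$ by definition. The path $Z$ runs from $q_s$ to $z^*\in V(H)$, and all its vertices are in $W_0$. We also need $s\ge1$: this holds because $q_s$ is the initial vertex of a $(V(Q)\setminus V(H),V(H))$-dipath, so $q_s\in V(Q)\setminus V(H)$, and $q_0\in V(H)$. Hence $Q[q_0,q_s]$ contains $q_1$.

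Next, reachability. Take a vertex $w=q_i$ with $0\le i\le s$.
\begin{itemize}
\item From $V(H)$: walk along $Q[q_0,q_i]$ from $q_0$ to $q_i$.
\item To $V(H)$: follow $Q[q_i,q_s]$ to $q_s$, then $Z$ to $z^*$.
\end{itemize}
Now take a vertex $w$ of $Z$.
\begin{itemize}
\item To $V(H)$: follow $Z[w,z^*]$.
\item From $V(H)$: go from $q_0$ along $Q[q_0,q_s]$ to $q_s$, then along $Z[q_s,w]$.
\end{itemize}
All these paths use only vertices of $W_0$. Combining them with the strong connectivity of $H$, any two vertices of $W_0$ are joined by a directed walk inside $D[W_0]$, passing through $H$ if necessary. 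So $D[W_0]$ is strongly connected.

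Finally, the size bound. We have $q_1\in W_0$ because $s\ge1$, and $q_1\notin V(H)$ because $Q$ is $H$-leaving, so $V(Q)\cap V(H)=\{q_0\}$. Since $V(H)\subseteq W_0$, it follows that $|W_0|\ge|V(H)|+1>|V(H)|$.

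There is no real obstacle here. The only point needing care is confirming $s\ge1$, which uses $q_s\notin V(H)$, so that $q_1$ genuinely lies in $W_0$.
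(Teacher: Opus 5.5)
Your proof is correct and matches the paper's reasoning: the paper simply notes that $q_1\in W_0\setminus V(H)$ and declares strong connectivity immediate from the orientations of $Q[q_0,q_s]$ and $Z$, which together form a directed route from $q_0\in V(H)$ back to $z^*\in V(H)$. Your argument is that same observation written out in full, including the check that $s\ge1$.
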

We now use the strong subdigraph $D[W_0]$ of $D$ to bound $t$ and $|N_D^+(r_t)\cap V(Q)|$.

\begin{claim}\label{clm:outneighbors}
The following statements hold.
\begin{enumerate}[label=\textup{(\roman*)},leftmargin=*,itemsep=1pt]
\item $2\le t\le m$.
\item $\bigl|N_D^+(r_t)\cap V(Q)\bigr|\ge m+3-t$.
\end{enumerate}
\end{claim}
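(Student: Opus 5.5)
We would prove the two parts separately. Part (ii) is a degree count based on Claim~\ref{clm:containment}. Part (i) combines that degree count with the maximality of the $m$-max pair, using Observation~\ref{obv:W}.

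\emph{Part (ii).} By Claim~\ref{clm:containment}, $N_D^+(r_t)\subseteq V(R)\cup\{q_1,\ldots,q_{m+1}\}$. Since $D$ has no loops, $r_t$ has at most $t-1$ out-neighbours in $V(R)$. Note that $q_1\in V(R)$, so this overlap must be handled carefully. Write the bound as
\[
d^+_D(r_t)\le |N^+_D(r_t)\cap (V(R)\setminus\{q_1\})| + |N^+_D(r_t)\cap V(Q)|\le (t-1)+|N^+_D(r_t)\cap V(Q)|.
\]
Combined with $d_D^+(r_t)\ge m+1$, this gives $|N^+_D(r_t)\cap V(Q)|\ge m+2-t$. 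That is one short of the claim, so the count must be sharpened.

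If $t=1$, then $r_t=q_1$ and the loop argument already saves one: the $t-1=0$ term counts nothing, and $q_1\notin N^+(q_1)$. For $t\ge2$, the set $V(R)\setminus\{r_t\}$ has $t-1$ elements including $q_1$. Hence the count is really
\[
|N^+(r_t)\cap(V(R)\setminus\{q_1,r_t\})|+|N^+(r_t)\cap V(Q)|\le (t-2)+|N^+(r_t)\cap V(Q)|,
\]
because $q_1$ is counted only once and $r_t$ is excluded. This gives $m+1\le (t-2)+|N^+(r_t)\cap V(Q)|$, that is, $|N^+(r_t)\cap V(Q)|\ge m+3-t$, which is (ii).

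\emph{Part (i), upper bound.} The upper bound $t\le m$ follows from (ii), since $|N^+(r_t)\cap V(Q)|\le m+1$ (and $r_t$ may be adjacent to $q_1,\ldots,q_{m+1}$ only). So $m+3-t\le m+1$ gives $t\ge 2$. For $t\le m$, we would instead argue by maximality, as follows.

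\emph{Part (i), lower bound via maximality.} Suppose $t\ge m+1$. Then $R$ contains $m$ vertices $r_2,\ldots,r_{m+1}$ outside $W$, forming a dipath of order $m$ disjoint from $W_0$. By Observation~\ref{obv:W}, $D[W_0]$ is a strongly connected subdigraph avoiding this dipath, with $|W_0|>|V(H)|$. This contradicts the maximality of the $m$-max pair, so $t\le m$.

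\emph{Part (i), ruling out $t=1$.} If $t=1$, then by Claim~\ref{clm:containment} $N^+(q_1)\subseteq\{q_2,\ldots,q_{m+1}\}$, which has size $m<m+1$. This is a contradiction, so $t\ge2$ directly. This also gives the sharper count in (ii) for $t\ge2$.

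The main obstacle is getting the off-by-one right in (ii): one must correctly account that $q_1$ lies in both $V(R)$ and $V(Q)$, and that $r_t$ is not its own out-neighbour. Once this is done, everything else is immediate.
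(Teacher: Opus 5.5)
Your proof is correct and is essentially the paper's. You exclude $t=1$ because then $N_D^+(q_1)\subseteq\{q_2,\ldots,q_{m+1}\}$, and you exclude $t\ge m+1$ because $r_2\cdots r_{m+1}$ avoids the strongly connected set $W_0$. Part (ii) is then the count $m+1\le (t-2)+|N_D^+(r_t)\cap V(Q)|$, using $N_D^+(r_t)\setminus V(Q)\subseteq\{r_2,\ldots,r_{t-1}\}$.

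The write-up needs some cleanup. Your ``upper bound'' and ``lower bound'' headings are swapped. Deriving $t\ge2$ from (ii) is circular, since your count for (ii) assumed $t\ge2$, so drop it along with the stray $t=1$ remark in Part (ii). Rely only on your direct exclusion of $t=1$.
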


\begin{proof}
If $t=1$, then $r_t=q_1$ and $V(R)=\{q_1\}$, so Claim~\ref{clm:containment} implies $d_D^+(r_t)\le m$, which contradicts $\delta^+(D)\ge m+1$.
Thus $t\ge 2$.
To show $t \le m$, suppose to the contrary that $t\ge m+1$.
Then $R':=r_2r_3\cdots r_{m+1}$ is a dipath of order $m$ whose vertex set is disjoint from $W_0$ by~\eqref{eq:R-meets}.
By Observation~\ref{obv:W}, the digraph $D-V(R')$ has a strong component of order greater than $|V(H)|$, which contradicts the choice of the $m$-max pair $(P,H)$.
Therefore (i) holds.

By Claim~\ref{clm:containment}, every out-neighbor of $r_t$ outside $V(Q)$ belongs to $\{r_2,\ldots,r_{t-1}\}$.
Since $d_D^+(r_t)\ge m+1$, it follows that $\bigl|N_D^+(r_t)\cap V(Q)\bigr|\ge d_D^+(r_t)-(t-2)\ge m+3-t$.
\end{proof}

By Claim~\ref{clm:outneighbors}(i), we have $m\ge2$.

\smallskip
\begin{claim}\label{clm:rotation}
$(r_t,q_j)\notin A(D)$ for every $j$ satisfying either
\[
2\le j\le\min\{s,t\} \qquad \text{or} \qquad s+1\le j\le t+1.
\]
\end{claim}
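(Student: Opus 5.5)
The plan is to argue by contradiction. Suppose $(r_t,q_j)\in A(D)$ for some $j$ in one of the two ranges. In each range I would use this arc to \emph{rotate} $Q$: replace the segment $q_2\cdots q_{j-1}$ of $Q$ by the detour $q_1r_2\cdots r_t q_j$. This produces a new $H$-leaving dipath $Q'$, and I would then exhibit a $(V(Q')\setminus V(H),V(H))$-dipath $Z'$ that beats $(Q,Z)$ in the lexicographic choice (first $\ell(Z)$, then the index of the initial vertex of $Z$). Where that fails, the fallback is a contradiction with the $m$-max choice of $(P,H)$, via a strongly connected set larger than $V(H)$ as in Claim~\ref{clm:containment} and Observation~\ref{obv:W}. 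The facts needed to keep the new objects valid are \eqref{eq:Z-meets} and \eqref{eq:R-meets}. Together they say that $R$ touches $Q\cup Z\cup H$ only in $q_1$, and $Z$ touches $Q$ only in $q_s$ and $q_0$.

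\textbf{First range, $2\le j\le\min\{s,t\}$.} Let
\[
Q^+=q_0q_1r_2\cdots r_tq_jq_{j+1}\cdots q_{m+1}.
\]
This is an $H$-leaving dipath of order $t+1+(m+2-j)\ge m+3$, since $j\le t$. Let $Q'$ be its initial segment of order $m+2$. Because $s\ge j$, the vertex $q_s$ lies on $Q^+$ at position $s+(t+1-j)>s$, and $Z$ still meets $Q^+$ only in $q_s$ and $q_0$.

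If this position is at most $m+1$, then $q_s\in V(Q')$. The pair $(Q',Z)$ has the same $\ell(Z)$ but a strictly larger initial index, which contradicts the tie-breaking rule.

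If instead $q_s$ is cut off by the truncation, I would switch to the $m$-max argument. The set
\[
B=V(H)\cup V(Z)\cup\{q_0,q_1\}\cup V(R)\cup\{q_j,\ldots,q_s\}
\]
induces a strongly connected subdigraph. Indeed, starting in $H$ we can walk $q_0q_1r_2\cdots r_tq_j\cdots q_s$ and then return to $H$ along $Z$. I then need a dipath of order $m$ disjoint from $B$. The natural candidates are built from $q_2\cdots q_{j-1}$ and $q_{s+1}\cdots q_{m+1}$, combined with the counting $t+1+s-j\ge m+2$ that holds in this subcase. This is the delicate step. If the leftover vertices do not form a dipath of order $m$, I would try truncating differently, for example keeping $q_s$ and dropping an earlier part of $R$. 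That is the first fix I would attempt.

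\textbf{Second range, $s+1\le j\le t+1$.} Again set
\[
Q'=q_0q_1r_2\cdots r_tq_j\cdots q_{m+1},
\]
truncated to order $m+2$, which is possible because $j\le t+1$. Now $Q'$ omits $q_2,\ldots,q_{j-1}$, and in particular it omits $q_s$ when $s\ge2$.

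For $s\ge2$ I would take $Z'=q_1q_2\cdots q_s+Z$. This is a $(V(Q')\setminus V(H),V(H))$-dipath, since $q_2,\ldots,q_s\notin V(Q')$ and $R$ avoids $Z$. Its length is $\ell(Z)+s-1>\ell(Z)$, which contradicts the maximality of $\ell(Z)$.

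The case $s=1$ I expect to be the main obstacle, because there the rotation does not lengthen $Z$. Here I would combine Observation~\ref{obv:W} with Claim~\ref{clm:outneighbors}. Since $W_0=V(H)\cup V(Z)\cup\{q_0,q_1\}$, the arc $r_tq_j$ lets me enlarge it to the strongly connected set $W_0\cup V(R)\cup\{q_j\}$. I would then look for an order-$m$ dipath inside $q_2\cdots q_{j-1}$ together with $q_{j+1}\cdots q_{m+1}$ to contradict the $m$-max choice. Failing that, I would reselect $Z'$ to start at a later vertex of $Q'$ and so contradict the tie-breaking rule.
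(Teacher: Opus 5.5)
Your rotation $Q^+$ and the tie-breaking contradiction when $q_s$ survives the truncation match the paper's argument. Your handling of the second range for $s\ge2$ is a valid alternative: $q_1q_2\cdots q_s+Z$ is a $(V(Q')\setminus V(H),V(H))$-dipath longer than $Z$.

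However, the two subcases you flag as delicate are genuine gaps, and the fallbacks you sketch cannot work.

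\textbf{First range, $q_s$ cut off, i.e.\ $s>b:=j+m-t$.} Your set $B$ contains $q_0,q_1,q_j,\ldots,q_s$. So only $(j-2)+(m+1-s)\le m-1$ vertices of $Q$ remain, split into two segments with no arc between them. The candidates you name therefore cannot supply an order-$m$ dipath. Shortening the detour through $R$ does not help either, since it would need an arc from $q_1$ to a later vertex of $R$, which you do not have.

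The missing observation is that no appeal to the $m$-max pair is needed. Take $Q'=q_0q_1r_2\cdots r_tq_j\cdots q_b$. The vertices $q_{b+1},\ldots,q_s$ lie off $Q'$. Since $s\ge j\ge2$, $R$ misses $Z$, so $Z$ meets $Q'$ at most in $q_0\in V(H)$. Hence $Z'=Q[q_b,q_s]+Z$ is a $(V(Q')\setminus V(H),V(H))$-dipath of length $\ell(Z)+(s-b)>\ell(Z)$. This contradicts the primary maximality, and it is the same backward-extension trick you used in the second range.

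\textbf{Second range with $s=1$.} The set $W_0\cup V(R)\cup\{q_j\}$ need not be strongly connected, because nothing guarantees a dipath from $r_t$ or $q_j$ back into $W_0$. Also, $q_2\cdots q_{j-1}$ together with $q_{j+1}\cdots q_{m+1}$ has only $m-1$ vertices, so that search is doomed as well.

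The right order-$m$ dipath is the rotated piece itself: $R_0=r_2\cdots r_tq_jq_{j+1}\cdots q_b$ with $b=j+m-t\le m+1$. It has exactly $(t-1)+(b-j+1)=m$ vertices. Because $j\ge s+1$, it is disjoint from $W_0=V(H)\cup V(Z)\cup V(Q[q_0,q_s])$ by \eqref{eq:Z-meets} and \eqref{eq:R-meets}. Then $D-V(R_0)$ contains the strongly connected digraph $D[W_0]$, of order greater than $|V(H)|$ by Observation~\ref{obv:W}, contradicting the choice of $(P,H)$.

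This argument disposes of the entire second range for every $s$, which is how the paper does it. Combined with the two $Z$-based contradictions in the first range (tie-breaking if $b\ge s$, the longer $Z'$ if $b<s$), it completes the claim.
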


\begin{proof}
Suppose that $(r_t,q_j)\in A(D)$ for some $j$ with $2\le j\le t+1$. Set $b:=j+m-t$.
Since $t\le m$ and $j\le t+1$, $j\le b\le m+1$.
Since $t\ge2$, the dipath $R[r_2,r_t]$ is well-defined and its vertices are disjoint from $V(Q)$ by~\eqref{eq:R-meets}.
Let $R_0=R[r_2,r_t]+(r_t,q_j)+Q[q_j,q_b]$.
Then $|V(R_0)|=(t-1)+(b-j+1)=m$ and $V(R_0)$ is disjoint from $V(H)$.

We first consider the case $s+1\le j\le t+1$.
All vertices $q_j,\ldots,q_b$ have indices greater than $s$.
It follows from~\eqref{eq:Z-meets} and~\eqref{eq:R-meets} that $V(R_0)\cap W_0=\emptyset$.
By Observation~\ref{obv:W}, the digraph $D-V(R_0)$ has a strong component of order greater than $|V(H)|$, which contradicts the maximality of $|V(H)|$.

Now we consider the case $2\le j\le\min\{s,t\}$.
Then $b\le m$ and $Q':=Q[q_0,q_1]+(q_1,r_2)+R_0$ is an $H$-leaving dipath of order $m+2$ satisfying
\begin{equation}\label{eq:Qprime-in-Q}
V(Q')\cap V(Q)=\{q_0,q_1\}\cup\{q_j,\ldots,q_b\}.
\end{equation}
Since $s\ge j\ge2$, we have $q_1\ne q_s$, and hence $V(R)\cap V(Z)=\emptyset$ by~\eqref{eq:Z-meets} and~\eqref{eq:R-meets}.

Suppose $b<s$.
Let $Z'=Q[q_b,q_s]+Z$.
The vertices $q_{b+1},\ldots,q_s$ are disjoint from $V(Q')$ by~\eqref{eq:Qprime-in-Q}, while $V(Z)\cap V(Q')\subseteq\{q_0\}\subseteq V(H)$ by~\eqref{eq:Z-meets}.
Since $V(R)\cap V(Z)=\emptyset$, $Z'$ is a $(V(Q')\setminus V(H),V(H))$-dipath.
However, $\ell(Z')=(s-b)+\ell(Z)>\ell(Z)$, which contradicts the maximality of $\ell(Z)$.

Suppose $b\ge s$.
Since $j\le s\le b$, the vertex $q_s$ lies on $Q'$.
Moreover,~\eqref{eq:Z-meets},~\eqref{eq:Qprime-in-Q}, and $V(R)\cap V(Z)=\emptyset$ imply that $V(Z)\cap(V(Q')\setminus V(H))=\{q_s\}$ and $V(Z)\cap V(H)=\{z^*\}$.
Thus $(Q',Z)$ belongs to the collection of pairs over which $(Q,Z)$ was chosen.
Relabel $Q'=q_0'q_1'\cdots q_{m+1}'$, and let $s'$ be the index such that $q_{s'}'=q_s$.
Since $j\le t$, $s'=(t+1)+(s-j)>s$.
This contradicts the secondary maximality condition in the choice of $(Q,Z)$.
\end{proof}

By Claim~\ref{clm:outneighbors}(i), we have $2\le t\le m$.
Every $j\in\{2,\ldots,t\}$ satisfies either $j\le\min\{s,t\}$ or $s+1\le j\le t+1$.
Thus Claim~\ref{clm:rotation} implies that none of the $t-1$ vertices $q_2,\ldots,q_t$ is an out-neighbor of $r_t$.
Then, by Claim~\ref{clm:containment}, $N_D^+(r_t)\cap V(Q)\subseteq \{q_1, q_{t+1}, \ldots, q_{m+1}\}$ and so $|N_D^+(r_t)\cap V(Q)|\le 1+(m+1-t)=m+2-t$, which contradicts Claim~\ref{clm:outneighbors}(ii).
Hence, we have shown that $V(H)=V(D)\setminus V(P)$ and $D-V(P)$ is strongly connected.
\end{proof}

\section{From vertex-deletion to arc-deletion}\label{sec:arc-deletion}

The following proposition makes precise the relationship between Conjectures~\ref{conj:mader-digraph} and~\ref{conj:arc-deletion}.
It is folklore that for a $k$-strong subdigraph $H$ of a digraph $D$, if every vertex in $V(D)\setminus V(H)$ has at least $k$ out-neighbors and at least $k$ in-neighbors in $V(H)$, then $D$ is $k$-strong.

\begin{proposition}\label{prop:arc-deletion}
Let $k\ge1$ and $m\ge3$ be integers. 
Suppose that Conjecture~\ref{conj:mader-digraph} holds for $k$ and $m-2$.
Then every $k$-strong digraph $D$ with $\delta^0(D)\ge 2k+m-3$ contains a dipath $P$ of order $m$ such that $D-A(P)$ is $k$-strong.
\end{proposition}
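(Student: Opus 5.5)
Given $D$ with $\delta^0(D)\ge 2k+m-3=2k+(m-2)-1$, the plan is to apply Conjecture~\ref{conj:mader-digraph} for $k$ and $m-2$. This yields a dipath $P_0=v_2v_3\cdots v_{m-1}$ of order $m-2$ such that $H:=D-V(P_0)$ is $k$-strong. We then extend $P_0$ at both ends by one vertex of $H$, choosing $v_1,v_m\in V(H)$ with $(v_1,v_2),(v_{m-1},v_m)\in A(D)$ and $v_1\ne v_m$. The result is a dipath $P=v_1v_2\cdots v_m$ of order $m$. The intended mechanism is that every arc of $P$ has at least one end in $V(P_0)$. Hence $D-A(P)$ contains $H$ untouched, and the internal vertices $v_2,\dots,v_{m-1}$ lose only the arcs of $P$ at them.

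For the existence of the extension, note that $v_2$ has in-degree at least $2k+m-3$. At most $m-3$ of its in-neighbours lie in $V(P_0)\setminus\{v_2\}$, so $v_2$ has at least $2k$ in-neighbours in $V(H)$. Symmetrically, $v_{m-1}$ has at least $2k$ out-neighbours in $V(H)$. Since $2k\ge2$, we can pick $v_1$ and $v_m$ distinct. (When $m=3$ we have $v_2=v_{m-1}$, and the same count works.)

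Next we apply the folklore fact to $D':=D-A(P)$ with the $k$-strong subdigraph $H$. Each vertex $v_i$ with $2\le i\le m-1$ originally has at least $2k$ out-neighbours and at least $2k$ in-neighbours in $V(H)$, by the computation above applied to $v_i$ in place of $v_2$ (at most $m-3$ neighbours inside $P_0$). In $D'$ it loses at most one out-arc and one in-arc, namely $(v_i,v_{i+1})$ and $(v_{i-1},v_i)$. Only $v_2$ loses an in-arc into $H$ (the arc from $v_1$), and only $v_{m-1}$ loses an out-arc into $H$ (the arc to $v_m$). So every vertex outside $V(H)$ retains at least $2k-1\ge k$ out- and in-neighbours in $V(H)$ within $D'$. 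Since $H$ is a subdigraph of $D'$ (no arc of $P$ has both ends in $V(H)$, as $v_1,v_m$ are the only vertices of $P$ in $H$ and they are non-adjacent on $P$ when $m\ge3$), the folklore fact gives that $D'$ is $k$-strong.

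The main point to check carefully is precisely this last parenthetical: arcs of $P$ must never lie inside $H$, which is where $m\ge3$ is used. The degree counting is otherwise routine, since it only needs $2k-1\ge k$. Also $|V(D')|\ge k+1$ holds trivially because $H$ is already $k$-strong.
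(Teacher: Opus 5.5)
Your proposal is correct and follows essentially the same argument as the paper. You apply the conjecture for $k$ and $m-2$, extend the resulting dipath at both ends by distinct vertices of $H$ (each end has at least $2k$ suitable neighbours there), observe that no arc of $P$ lies inside $V(H)$, and finish with the folklore fact, since each vertex of the short dipath keeps at least $2k-1\ge k$ in- and out-neighbours in $V(H)$.
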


\begin{proof}
By Conjecture~\ref{conj:mader-digraph} for $k$ and $m-2$, there is a dipath $Q=q_1q_2\cdots q_{m-2}$ of order $m-2$ such that $H:=D-V(Q)$ is $k$-strong.
For every $q\in V(Q)$, we have
\[
\begin{aligned}
|N_D^+(q)\cap V(H)|
 &\ge d_D^+(q)-(|V(Q)|-1)\ge (2k+m-3)-(m-3)=2k,\\
|N_D^-(q)\cap V(H)|
 &\ge d_D^-(q)-(|V(Q)|-1)\ge (2k+m-3)-(m-3)=2k.
\end{aligned}
\]
Thus we may choose distinct vertices
\[
x\in N_D^-(q_1)\cap V(H)
\qquad\text{and}\qquad
y\in N_D^+(q_{m-2})\cap V(H).
\]
Then $P:=xq_1q_2\cdots q_{m-2}y$ is a dipath of order $m$.

Note that no arc of $P$ has both ends in $V(H)$. 
Thus $H$ is a $k$-strong subdigraph of $D-A(P)$.
For each vertex $q \in V(Q)$, since at most one arc of $P$ leaves and at most one arc of $P$ enters $q$, 
\[
\begin{aligned}
|N_{D-A(P)}^+(q)\cap V(H)|
&\ge |N_D^+(q)\cap V(H)|-1\ge 2k-1\ge k,\\
|N_{D-A(P)}^-(q)\cap V(H)|
 &\ge |N_D^-(q)\cap V(H)|-1\ge 2k-1\ge k.
\end{aligned}
\]
Therefore $D-A(P)$ is $k$-strong.
\end{proof}

\begin{proof}[Proof of Corollary~\ref{cor:arc-deletion}]
If $m=2$, the result follows from Mader's result~\cite{Mader85}.
If $m\ge3$, the result follows from Theorem~\ref{thm:main} and Proposition~\ref{prop:arc-deletion} with $k=1$.
\end{proof}

When $m=3$, the hypothesis of Proposition~\ref{prop:arc-deletion} is precisely the case $m=1$ of Conjecture~\ref{conj:mader-digraph}, which was proved by Mader~\cite{Mader91}.
Thus Proposition~\ref{prop:arc-deletion} with $m=3$ yields the following corollary.

\begin{corollary}\label{cor:arc-deletion-order-three}
Let $k$ be a positive integer.
Every $k$-strong digraph $D$ with $\delta^0(D)\ge2k$ contains a dipath $P$ of order three such that $D-A(P)$ is $k$-strong.
\end{corollary}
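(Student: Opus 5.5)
This corollary follows directly from Proposition~\ref{prop:arc-deletion} with $m=3$. The hypothesis there is that Conjecture~\ref{conj:mader-digraph} holds for $k$ and $m-2=1$. That case is Mader's theorem~\cite{Mader91}: every $k$-strong digraph $D$ with $\delta^0(D)\ge 2k$ has a vertex $x$ such that $D-x$ is $k$-strong. A dipath of order $1$ is just a single vertex, so this is exactly Conjecture~\ref{conj:mader-digraph} with $m=1$, whose semidegree bound is $2k+1-1=2k$. The proposition then requires $\delta^0(D)\ge 2k+m-3=2k$, which matches the assumption of the corollary. It yields a dipath $P$ of order $3$ such that $D-A(P)$ is $k$-strong.

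For completeness I would also trace the construction in this case. Take a vertex $q_1$ such that $H:=D-q_1$ is $k$-strong. The vertex $q_1$ has at least $2k$ in-neighbors and at least $2k$ out-neighbors in $H$. Choose distinct $x\in N_D^-(q_1)$ and $y\in N_D^+(q_1)$; this is possible since $2k\ge 2$ and each set has at least two elements. Set $P=xq_1y$.

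Neither arc of $P$ lies inside $H$, so $H$ is still a $k$-strong subdigraph of $D-A(P)$. Deleting the arcs of $P$ removes only one in-neighbor and one out-neighbor of $q_1$. So in $D-A(P)$, the vertex $q_1$ keeps at least $2k-1\ge k$ neighbors in $H$ in each direction. The folklore fact quoted before Proposition~\ref{prop:arc-deletion} then gives that $D-A(P)$ is $k$-strong.

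There is no real obstacle here; the only care needed is checking that the parameter substitution is legitimate. Proposition~\ref{prop:arc-deletion} requires $m\ge3$, and $m=3$ is allowed. The $m=1$ case of the conjecture is genuinely Mader's vertex result, with no extra order condition needed beyond $D-x$ being $k$-strong, which already includes $|V(D-x)|\ge k+1$.
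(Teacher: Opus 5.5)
Your proposal is correct and matches the paper's argument: it invokes Proposition~\ref{prop:arc-deletion} with $m=3$, noting that the required $m-2=1$ case of Conjecture~\ref{conj:mader-digraph} is Mader's vertex-deletion theorem~\cite{Mader91}. Your explicit trace of the construction $P=xq_1y$ is just the proposition's proof specialized to this case, and it is sound.
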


\section*{Acknowledgements}

Hojin Chu was supported by a KIAS Individual Grant (CG101801) at Korea Institute for Advanced Study.
Boram Park was supported by the National Research Foundation of Korea (NRF) grant funded by the Korea government (MSIT) (No.~RS-2025-00523206) and by the New Faculty Startup Fund from Seoul National University.

\bigskip
\noindent\textbf{Declaration of generative AI use.}
\par\smallskip
\noindent 
During the preparation of this work, the authors used OpenAI's GPT-5.6 Sol to explore possible refinements to the proof, check proof details, and improve the exposition.
The problem formulation, main structural ideas, and proof framework were developed by the authors.
All AI-assisted suggestions were independently verified and revised by the authors, who take full responsibility for the manuscript.

\end{document}